\documentclass[a4paper]{amsart}

\pdfoutput=1

\usepackage{lmodern}
\usepackage{bbm}
\usepackage{amsmath,amssymb,amsthm,url,scalerel}
\usepackage[utf8]{inputenc}
\usepackage[T1]{fontenc}
\usepackage{libertine}
\usepackage[libertine,cmintegrals,cmbraces]{newtxmath}
\usepackage{verbatim}
\usepackage[shortlabels]{enumitem}
\usepackage{stmaryrd}
\usepackage{mathtools}
\usepackage{microtype}
\usepackage{multicol}
\usepackage{xcolor}
\definecolor{darkred}{RGB}{139,0,0}
\definecolor{darkblue}{RGB}{0,0,139}
\definecolor{darkgreen}{RGB}{0,100,0}
\usepackage{hyperref}
\hypersetup{
  colorlinks   = true, 
  urlcolor     = darkred, 
  linkcolor    = darkred, 
  citecolor   = darkgreen}
\urlstyle{sf}
\usepackage{tikz}
\usepackage{tikz-cd}
\usepackage{sseq}
\usepackage[enableskew]{youngtab}
\usepackage{ytableau}
\usepackage[capitalise]{cleveref}
\usepackage{nicefrac}
\usepackage[margin=1.3in]{geometry}

\AtBeginDocument{%
   \def\MR#1{}
}
\setenumerate[1]{label=(\roman*),} 
\setitemize[1]{label=\raisebox{0.25ex}{\tiny$\bullet$}}

\newcommand{\Diff}{\ensuremath{\mathrm{Diff}}}

\newcommand{\BDiff}{\ensuremath{\mathrm{BDiff}}}

\newcommand{\Emb}{\ensuremath{\mathrm{Emb}}}

\newcommand{\id}{\mathrm{id}}

\newcommand{\im}{\mathrm{im}}

\newcommand{\inc}{\mathrm{inc}}

\newcommand{\colim}{\mathrm{colim}}
\newcommand{\hocolim}{\mathrm{hocolim}}
\newcommand{\Aut}{\mathrm{Aut}}

\newcommand{\Fr}{\mathrm{Fr}}

\newcommand{\interior}{\mathrm{int}}

\newcommand{\SO}{\mathrm{SO}}

\DeclareMathAlphabet{\mathpzc}{OT1}{pzc}{m}{it}

\newcommand{\oH}{\ensuremath{\mathrm{H}}}

\newcommand{\bfC}{\ensuremath{\mathbf{C}}}

\newcommand{\bfZ}{\ensuremath{\mathbf{Z}}}

\newcommand{\Tor}{{\mathrm{Tor}}}

\newcommand{\ra}{\rightarrow}

\newcommand{\lra}{\longrightarrow}

\newcommand{\xra}[1]{\xrightarrow{#1}}

\newcommand{\longtwoheadrightarrow}{\relbar\joinrel\twoheadrightarrow}

\newcommand{\mr}[1]{{\mathrm{#1}}}

\newcommand{\circled}[1]{\raisebox{.5pt}{\textcircled{\raisebox{-.9pt} {#1}}}}
\makeatletter
\renewcommand{\boxed}[1]{\text{\fboxsep=.2em\fbox{\m@th$\displaystyle#1$}}}
\makeatother

\newtheorem{bigthm}{Theorem}

\newtheorem{thm}{Theorem}

\newtheorem{lem}[thm]{Lemma}

\theoremstyle{definition}

\theoremstyle{remark}

\newtheorem{rem}[thm]{Remark}
\newtheorem*{nrem}{Remark}

\setcounter{tocdepth}{1}

\addtolength{\textwidth}{+0.5cm}
\calclayout

\begin{document}

\title{On Torelli groups and Dehn twists of smooth 4-manifolds}

\author{Manuel Krannich}
\address{Department of Mathematics, Karlsruhe Institute of Technology, 76131 Karlsruhe, Germany}
\email{krannich@kit.edu}

\author{Alexander Kupers}
\address{Department of Computer and Mathematical Sciences, University of Toronto Scarborough, 1265 Military Trail, Toronto, ON M1C 1A4, Canada}
\email{a.kupers@utoronto.ca}

\begin{abstract}This note has two related but independent parts. Firstly, we prove a generalisation of a recent result of Gay on the smooth mapping class group of $S^4$. Secondly, we give an alternative proof of a consequence of work of Saeki, namely that the Dehn twist along the boundary sphere of a simply-connected closed smooth $4$-manifold $X$ with $\partial X\cong S^3$ is trivial after taking connected sums with enough copies of $S^2\times S^2$.
\end{abstract}

\subjclass[2020]{57N37, 57R52, 57K40, 57S05}

\maketitle

This note serves to record two results on the mapping class group of compact simply connected smooth $4$-manifolds, the first on their so-called \emph{Torelli subgroup} and the second on stable \emph{Dehn twists}.

\subsection*{Generating Torelli groups}Recently, Gay \cite[Theorem 1]{Gay} constructed a surjection
\begin{equation}\label{equ:gays-morphism}\smash{
\underset{g \to \infty}{\colim}\, \pi_1(\Emb(\sqcup^g S^2,W_{g,1}),\inc) \longtwoheadrightarrow \pi_0(\Diff^+(S^4))}\end{equation}
onto the smooth oriented mapping class group of $S^4$, whose domain is a colimit of the fundamental groups of the space of smooth embeddings of $\sqcup^g S^2$ into $W_{g,1}\coloneq (S^2 \times S^2)^{\sharp g} \setminus \mr{int}(D^4)$, based at the embedding $\inc \colon \sqcup^g S^2 \hookrightarrow W_{g,1}$ induced by $g$ copies of the inclusion $S^2 \times \{\ast\} \subset S^2 \times S^2$. In the first part of this note, we use work of Kreck \cite{KreckIsotopy} and Quinn \cite{Quinn} to prove a generalisation of Gay's result which applies to all simply connected closed oriented smooth 4-manifolds $X$. For such a general $X$, the target of \eqref{equ:gays-morphism} needs to be replaced by the \emph{Torelli subgroup} $\Tor(X) \subset \Diff^+(X)$---the subgroup of those diffeomorphisms that act as the identity on the integral homology of $X$. 

\begin{bigthm}\label{thm:gay}For a closed simply-connected smooth 4-manifold $X$, there is a surjective homomorphism
	\[\smash{\underset{g \to \infty}{\colim}\, \pi_1(\Emb(\sqcup^gD^2\times S^2,X \sharp W_{g,1}),\inc) \longtwoheadrightarrow \pi_0(\Tor(X)).}\]
\end{bigthm}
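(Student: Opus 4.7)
The plan is to construct the homomorphism via parametrised isotopy extension and then establish its surjectivity using stable isotopy results of Kreck and Quinn in dimension four.

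For the construction, the fibration sequence
\[
\Diff(X \sharp W_{g,1}, \inc) \longrightarrow \Diff(X \sharp W_{g,1}) \longrightarrow \Emb(\sqcup^g D^2 \times S^2, X \sharp W_{g,1})
\]
yields a connecting map $\pi_1(\Emb, \inc) \to \pi_0 \Diff(X \sharp W_{g,1}, \inc)$, where the fibre consists of diffeomorphisms fixing a neighbourhood of the image of $\inc$ pointwise. The complement $Y_g := X \sharp W_{g,1} \setminus \inte(\inc(\sqcup^g D^2 \times S^2))$ has boundary $\sqcup^g (S^1 \times S^2) \sqcup S^3$; capping off each of the $g$ new $S^1 \times S^2$ components with $S^1 \times D^3$ cancels the corresponding $S^2 \times S^2$-stabilisation, and filling the remaining $S^3$ with $D^4$ recovers $X$. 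Extending diffeomorphisms by the identity over these capping pieces gives a restriction map $\pi_0 \Diff(X \sharp W_{g,1}, \inc) \to \pi_0 \Diff(X)$, and composition yields the desired homomorphism. Since each loop of embeddings lifts to a loop in $\Diff(X \sharp W_{g,1})$ based at the identity, the resulting diffeomorphism of $X \sharp W_{g,1}$ is isotopic to the identity and hence acts trivially on $H_*(X \sharp W_{g,1})$; as $H_*(X)$ is a direct summand, its restriction lies in $\pi_0 \Tor(X)$. Compatibility with the stabilisation maps $g \mapsto g+1$ is routine, so the map descends to the colimit.

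For surjectivity, given $\phi \in \Tor(X)$, the task is to exhibit a loop of embeddings mapping to $[\phi]$ for some $g$. By Kreck's stable isotopy theorem for simply-connected smooth $4$-manifolds \cite{KreckIsotopy}, the stabilisation $\phi \sharp \id_{W_{g,1}}$ becomes pseudo-isotopic to the identity in $\Diff(X \sharp W_{g,1})$ for some sufficiently large $g$. Invoking Quinn's theorem \cite{Quinn} that pseudo-isotopy implies isotopy for simply-connected $4$-manifolds---if necessary after additional $S^2\times S^2$-stabilisation to reconcile the topological and smooth categories---then produces a smooth isotopy $\Psi_t$ from $\id$ to $\phi \sharp \id$. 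Since $\phi \sharp \id$ fixes the image of $\inc$ pointwise, the path $\Psi_t \circ \inc$ is a loop in $\Emb(\sqcup^g D^2 \times S^2, X \sharp W_{g,1})$ based at $\inc$. By construction its extension is $\Psi_t$, which terminates at $\phi \sharp \id$ and restricts to $\phi$ on $X$, so the loop indeed maps to $[\phi]$.

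The main obstacle is the precise form of the Kreck-Quinn input in the smooth category with stabilisation: Quinn's theorem is most cleanly stated topologically, and its smooth upgrade via further $S^2 \times S^2$-connect sums needs to be carefully assembled. Once that stable smooth isotopy-triviality of Torelli elements is in hand, the remainder is a formal application of parametrised isotopy extension.
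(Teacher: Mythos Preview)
Your approach is essentially the paper's: the same isotopy-extension fibration, the same capping-off map, and the same appeal to Kreck and Quinn for surjectivity. Two points deserve tightening.

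First, Kreck's theorem is not a \emph{stable} pseudo-isotopy statement: for a closed simply-connected smooth $4$-manifold, a diffeomorphism is pseudo-isotopic to the identity if and only if it acts trivially on $H_2$, with no stabilisation needed. The stabilisation enters only through Quinn's result, which upgrades pseudo-isotopy to genuine isotopy after enough $S^2\times S^2$ connect-sums; this is already a smooth statement, so no additional ``smooth upgrade'' is required.

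Second, and more substantively, you write ``$\phi\sharp\id_{W_{g,1}}$'' as if it were well-defined, but forming it requires first isotoping $\phi$ to fix a disc, and the resulting class in $\pi_0(\Diff_\partial(X\sharp W_{g,1}))$ depends on this choice up to the boundary Dehn twist $t_X$. A priori only one of the two lifts is stably isotopic to the identity rel boundary. The paper handles this by factoring the map through $\pi_0(\Tor_\partial(X\sharp D^4))\to\pi_0(\Tor(X))$, whose kernel is generated by $t_X$, and showing (Lemma~2) that $\pi_0(\Tor_\partial(X\sharp D^4))$ is generated by $t_X$ together with stably trivial elements; the Dehn twist then dies in the final step regardless. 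Your argument can be repaired by observing that the pseudo-isotopy of $\phi$ on the closed manifold $X$ can be isotoped to fix a slice disc $D^4\times I\subset X\times I$, which singles out a lift $\tilde\phi$ that is pseudo-isotopic to the identity rel $\partial$, and it is \emph{this} lift to which Quinn applies.
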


\begin{nrem} For $X=S^4$, we have $\Tor(S^4)=\Diff^+(S^4)$ and our result recovers Gay's. Indeed, comparing the construction we give in the proof of \cref{thm:gay} with Gay's description of \eqref{equ:gays-morphism} in \cite[p.\,8--9]{Gay}, one sees the homomorphism we construct agrees with Gay's, up to precomposition with the surjective homomorphism on fundamental groups induced by the map $\Emb(\sqcup^gD^2\times S^2, W_{g,1})\ra \Emb(\sqcup^gS^2,W_{g,1})$ obtained by restriction along $\sqcup^g*\times S^2\subset \sqcup^gD^2\times S^2$. The fact that this homomorphism is surjective follows from the long exact sequence in homotopy groups induced by the restriction map, since the homotopy fibre over the inclusion is homotopy equivalent to the path-connected space $\mr{Map}(S^2,\mr{SO}(2))^g$.\end{nrem}

\subsection*{Dehn twists are stably trivial}Related to, but independent of, the proof of \cref{thm:gay} (see \cref{rem:related}), we show in the second part of this note that the Dehn twist $t_X\in\pi_0(\Diff_\partial(X\sharp D^4))$ along the boundary sphere of $X\sharp D^4$ for a closed smooth simply-connected $4$-manifold is stably isotopic to the identity. In addition to Kreck's and Quinn's work mentioned above, this relies on Wall's stable classification in \cite{Wall4mfds} and Galatius--Randal-Williams' work on stable moduli spaces of manifolds \cite{GRWII}. 

\begin{bigthm}[Saeki] \label{thm:dehn}For a closed simply-connected smooth 4-manifold $X$, the Dehn twist $t_X \in \pi_0(\Diff_\partial(X\sharp D^4))$ lies in the kernel of the stabilisation map $\smash{\pi_0(\Diff_\partial(X\sharp D^4))\ra \underset{g \to \infty}{\colim}\, \pi_0(\Diff_\partial(X\sharp W_{g,1}))}$.
\end{bigthm}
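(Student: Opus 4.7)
The strategy combines Wall's stable classification of simply-connected smooth $4$-manifolds, Galatius--Randal-Williams' work on stable moduli spaces, and Kreck's and Quinn's $4$-dimensional isotopy results.

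The Dehn twist $t_X$ is constructed by rotating a collar $S^3 \times [0,1]$ of the boundary of $X \sharp D^4$ through the generator of $\pi_1(\mathrm{SO}(4))$ and extending by the identity across $X$. This construction is natural in the ambient manifold and compatible with connected sum, so the stabilisation map sends $t_X$ to the Dehn twist $t_{X \sharp W_{g, 1}}$ of the stabilised manifold. Consequently, the question of stable triviality depends only on the stable diffeomorphism type of $X$, and Wall's stable classification---which identifies this type in terms of signature and spin type---reduces the problem to verifying the vanishing on a short list of model manifolds such as $S^4$, $\mathbb{CP}^2 \sharp \overline{\mathbb{CP}^2}$, and K3.

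For each model, I would detect the class of $t_X$ in the stable moduli space $\colim_g B\Diff_\partial(X \sharp W_{g, 1})$ using Galatius--Randal-Williams, who identify this moduli space (after group completion and within a homological range) with the basepoint component of an infinite loop space of a Madsen--Tillmann Thom spectrum $\mathrm{MT}\theta$ for an appropriate tangential structure depending on the model. I would then trace the class of $t_X$ through this identification to an element pulled back from $\pi_1(\mathrm{SO}(4)) \cong \mathbb{Z}/2$, and verify via standard low-dimensional computations with $\mathrm{MT}\theta$ that the image vanishes.

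The main obstacle is bridging homological detection via Galatius--Randal-Williams with an actual isotopy triviality statement in $\pi_0(\Diff_\partial)$. This is where Kreck's modified surgery---which expresses stable pseudo-isotopy classes of diffeomorphisms in terms of a bordism invariant---and Quinn's $4$-dimensional isotopy results---which upgrade pseudo-isotopy to isotopy in the stable range---enter the picture. The most delicate step, which I expect to be the technical heart of the argument, is matching Kreck's bordism invariant with the class detected in the homotopy of $\mathrm{MT}\theta$, so that vanishing in the stable moduli space can be upgraded to geometric triviality of $t_X$ after connect-summing with enough copies of $S^2 \times S^2$.
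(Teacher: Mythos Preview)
You have assembled the right ingredients—Wall's stable classification, Kreck, Quinn, and Galatius--Randal-Williams—but the way you propose to combine them does not close the gap you yourself identify. Galatius--Randal-Williams gives only a \emph{homology} equivalence between the stable moduli space and $\Omega_0^\infty\mathrm{MT}\theta$, so tracing $t_X$ through it detects at best its image in $H_1$ of the stable mapping class group; vanishing there does not force $t_X$ to be trivial as a group element. Your proposed bridge via ``Kreck's modified surgery bordism invariant'' is not the result that actually applies here: the relevant theorem (from \cite{KreckIsotopy}, not modified surgery) says that for a \emph{closed} simply-connected $4$-manifold a diffeomorphism lies in the Torelli group iff it is pseudo-isotopic to the identity, and Quinn upgrades pseudo-isotopy to stable isotopy. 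Together these identify $\colim_g\pi_0(\Diff^+(X\sharp W_g))$ with the automorphism group of the intersection form, but they say nothing directly about $t_X\in\pi_0(\Diff_\partial(X\sharp D^4))$, which already becomes trivial once the boundary is filled in. There is no mechanism of the sort you describe for matching a ``bordism invariant of $t_X$'' with a class in $\mathrm{MT}\theta$.

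The paper's argument is instead an indirect comparison. The non-spin models are dispatched by a citation to Giansiracusa; for $X=\sharp^k K3$ one uses Kreck--Quinn as above together with $E_8\oplus(-E_8)\cong H^{\oplus 8}$ and a cofinality argument to show that the composite
\[
\underset{g \to \infty}{\colim}\,\pi_0(\Diff_\partial(W_{g,1}))\lra\underset{g \to \infty}{\colim}\,\pi_0(\Diff_\partial(X\sharp W_{g,1}))\lra\underset{g \to \infty}{\colim}\,\pi_0(\Diff^+(X\sharp W_g))
\]
is a homology isomorphism in all degrees. Feeding this into the Serre exact sequence of the central extension by $\langle t_{X\sharp W_\infty}\rangle$ yields: \emph{if} the stable Dehn twist were nontrivial, then $\colim_g H_1(\BDiff_\partial(X\sharp W_{g,1}))\cong\bfZ/2\oplus\colim_g H_1(\BDiff_\partial(W_{g,1}))$. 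But Galatius--Randal-Williams identifies both colimits with $H_1(\Omega_0^\infty\mathrm{MT}\theta)$ for the \emph{same} $\theta$ (namely $\mathrm{BSpin}(4)\to\mathrm{BO}(4)$), and since this group is finitely generated the extra $\bfZ/2$ summand is impossible. The key idea you are missing is that GRW is used \emph{comparatively}—$X\sharp W_{g,1}$ and $W_{g,1}$ hit the same Thom spectrum—rather than to compute the image of $t_X$ directly.
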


\begin{nrem}\,
\begin{enumerate}[leftmargin=*]
\item As indicated by the attribution, \cref{thm:dehn} can also be deduced (by very different means) from Saeki's computation of the stable mapping class groups of compact simply-connected smooth $4$-manifolds with nonempty connected boundary \cite[Thm 3.7, Prop.\,4.2]{Saeki}.
\item In the case where $X$ does not not admit a spin structure, there is direct argument that the Dehn twist $t_X$ is isotopic to the identity, even before stabilisation by $S^2\times S^2$; see \cite[Corollary A.5]{OrsonPowell}. In the special case where $X=Y\sharp \bfC P^2$ or $\smash{X=Y\sharp \overline{\bfC P}^2}$ for some $Y$, this was shown in \cite[Theorem 2.4]{Giansiracusa}.
\item If $X$ is spin, then the Dehn twist $t_X$ is \emph{not} isotopic to the identity \emph{without} stabilisation in general, for example in the case $X=K3$. This was independently shown by Baraglia--Konno \cite{BaragliaKonno2} and Kronheimer--Mrowka \cite[Proposition 1.2]{KronheimerMrowka} (see also the discussion in \cite[Section 1.1]{OrsonPowell}).
\end{enumerate}
\end{nrem}

We now turn to the proofs of Theorem~\ref{thm:gay} and \ref{thm:dehn}, in this order.

\subsection*{Proof of \cref{thm:gay}}\label{sec:proof}
Consider the embedding $\smash{\sqcup^g D^2\times S^2\subset W_{g,1}=(S^2 \times S^2)^{\sharp g}\backslash\interior(D^4)}$ obtained by viewing $D^2\subset S^2$ as the upper hemisphere, performing the connected sums in the complement of $\sqcup^g D^2\times S^2\subset \sqcup^g S^2\times S^2$, and choosing the embedded $4$-disc also away from this complement. Note that the complement $Y_{g,1}\coloneq W_{g,1}\backslash \sqcup^g \mr{int}(D^2\times S^2)$ has the diffeomorphism type of the connected sum $D^4\sharp\smash{(D^2\times S^2)^{\sharp g}}$. 

\medskip

\noindent The homomorphism in \cref{thm:gay} will arise as the middle row of a commutative diagram of groups
\[\hspace{-0.2cm}\begin{tikzcd}[column sep=0.6cm]
&\pi_0(\Tor_\partial(X\sharp D^4))\arrow[dr,equal, bend left=10]\arrow[d,"\circled{4}",hook]&&\\
\underset{g \to \infty}{\colim}\, \pi_1(\Emb(\sqcup^g D^2\times S^2,X \sharp W_{g,1})) \rar{\circled{1}} &\underset{g \to \infty}{\colim}\, \pi_0(\Tor_\partial(X \sharp Y_{g,1}))\arrow[r,"\circled{2}",two heads]\dar{\circled{5}}&\pi_0(\Tor_\partial(X \sharp D^4))\arrow[r,two heads,"\circled{3}"]& \pi_0(\Tor(X))\\
&\underset{g \to \infty}{\colim}\, \pi_0(\Diff_\partial(X\sharp W_{g,1}))&&
\end{tikzcd}\]
which we explain in the following.
\subsubsection*{\circled{1} \& \circled{5}}By the parametrised isotopy extension theorem (see e.g.~\cite[Theorem 6.1.1]{WallDiffTop}) restricting diffeomorphisms of $X\sharp W_{g,1}$ along the embedding $\sqcup^g D^2\times S^2\subset X\sharp W_{g,1}$ induces a fibre sequence 
\begin{equation}\label{equ:fibration}
	\Diff_\partial(X \sharp Y_{g,1})\lra \Diff_\partial(X\sharp W_{g,1})\lra \Emb(\sqcup^g D^2\times S^2,X\sharp W_{g,1})
\end{equation}
with fibre taken over the inclusion. Extending diffeomorphisms and embeddings by the identity along the inclusion of pairs $(W_{g-1,1},\sqcup^{g-1} D^2\times S^2)\subset (W_{g,1},\sqcup^{g} D^2\times S^2)$ induces a map of fibre sequences between \eqref{equ:fibration} for $g-1$ and $g$. The long exact sequences of homotopy groups then induce an exact sequence \begin{equation}\label{equ:exact-colim-sequence}
	\smash{\underset{g \to \infty}{\colim}\,\pi_1(\Emb(\sqcup^g D^2\times S^2,X \sharp W_{g,1}),\inc) \lra \underset{g \to \infty}{\colim}\,\pi_0(\Diff_\partial(X \sharp Y_{g,1}))\lra \underset{g \to \infty}{\colim}\,\pi_0(\Diff_\partial(X \sharp W_{g,1})).}
\end{equation}
The right morphism in this sequence gives $\circled{5}$ by restriction to the Torelli subgroups in the source and the left morphism yields a homomorphism that qualifies as \circled{$1$} by the following lemma.
\begin{lem}For a closed simply-connected smooth $4$-manifold $X$, the image of the connecting map 
	\[\pi_1(\Emb_{}(\sqcup^g D^2\times S^2,X\sharp W_{g,1}),\mr{inc})\lra \pi_0(\Diff_\partial(X\sharp Y_{g,1}))\] 
induced by \eqref{equ:fibration} is contained in the subgroup $\pi_0(\Tor_\partial(X\sharp Y_{g,1}))\subset \pi_0(\Diff_\partial(X\sharp Y_{g,1}))$.
\end{lem}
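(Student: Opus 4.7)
The plan is to unwind the definition of the connecting map and exploit the fact that the resulting diffeomorphism, although only isotopic to the identity in the larger manifold $X\sharp W_{g,1}$, can be compared to the identity via a homology map that turns out to be injective in the only degree that matters. Concretely, I would fix a loop $\gamma_t$ of embeddings based at $\inc$ and lift it along the fibration \eqref{equ:fibration} to a path $\tilde\gamma_t$ in $\Diff_\partial(X\sharp W_{g,1})$ with $\tilde\gamma_0=\id$. By construction the endpoint $\tilde\phi\coloneq\tilde\gamma_1$ preserves the embedded submanifold $\sqcup^gD^2\times S^2$ pointwise, and hence restricts to a diffeomorphism $\phi\in\Diff_\partial(X\sharp Y_{g,1})$ that represents the image of $[\gamma]$ under the connecting map. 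Since $\tilde\phi$ is path-connected to $\id$ via $\tilde\gamma$, it acts trivially on $H_*(X\sharp W_{g,1};\bfZ)$.

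Next I would use the $\tilde\phi$-equivariance of the inclusion $j\colon X\sharp Y_{g,1}\hookrightarrow X\sharp W_{g,1}$, which follows from $\tilde\phi$ preserving the complementary handles pointwise, to obtain the identity $j_*\circ\phi_*=\tilde\phi_*\circ j_*=j_*$ on homology. This reduces the claim to showing that $j_*$ is injective on those homology groups of $X\sharp Y_{g,1}$ that can be nonzero. As $X\sharp Y_{g,1}$ is a simply connected $4$-manifold with nonempty boundary, $H_k(X\sharp Y_{g,1};\bfZ)$ vanishes for $k\in\{1,3,4\}$, so only $H_2$ requires treatment.

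Finally, injectivity of $j_*$ on $H_2$ I would verify via the long exact sequence of the pair $(X\sharp W_{g,1},X\sharp Y_{g,1})$: excision identifies the relative homology with $H_*(\sqcup^gD^2\times S^2,\sqcup^gS^1\times S^2)$, and the relative Künneth formula applied to $(D^2,S^1)\times S^2$ shows that this is supported in degrees $2$ and $4$. In particular $H_3(X\sharp W_{g,1},X\sharp Y_{g,1})=0$, so the long exact sequence forces $j_*$ to be injective on $H_2$, completing the argument. I do not foresee a substantive obstacle here; the only point that needs care is the explicit identification of the image of the connecting map as the restriction of a diffeomorphism isotopic to the identity on the ambient manifold, which is standard for the path-lifting definition of the connecting homomorphism in a Serre fibration.
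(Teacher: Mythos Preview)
Your approach is the same as the paper's: both use that the extension of $\phi$ to $X\sharp W_{g,1}$ is isotopic to the identity (this is precisely the exactness of the long exact sequence of \eqref{equ:fibration}) and then appeal to the homology of the inclusion $j\colon X\sharp Y_{g,1}\hookrightarrow X\sharp W_{g,1}$. Your unwinding of the connecting map and the excision computation for $H_*(X\sharp W_{g,1},X\sharp Y_{g,1})$ are fine.

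There is, however, a genuine error: the claim that $H_3(X\sharp Y_{g,1};\bfZ)=0$ is false for $g\ge 1$. For any compact oriented simply-connected $4$-manifold $M$ with boundary one has $H_3(M)\cong H^1(M,\partial M)\cong \bfZ^{c-1}$, where $c$ is the number of boundary components; here $\partial(X\sharp Y_{g,1})\cong S^3\sqcup(\sqcup^g S^1\times S^2)$ has $g+1$ components, so $H_3(X\sharp Y_{g,1})\cong\bfZ^g$. (Concretely, $Y_{1,1}\cong (D^2\times S^2)\setminus\interior(D^4)$, which visibly has $H_3\cong\bfZ$.) In particular $j_*$ is \emph{not} injective on $H_3$, since $H_3(X\sharp W_{g,1})=0$, so your reduction to degree~$2$ does not cover everything.

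The fix is immediate and uses that $\phi$ fixes the boundary pointwise, a feature you have not yet exploited. From the long exact sequence of the pair $(X\sharp Y_{g,1},\partial(X\sharp Y_{g,1}))$ and $H_3(X\sharp Y_{g,1},\partial)\cong H^1(X\sharp Y_{g,1})=0$ one sees that $H_3(\partial(X\sharp Y_{g,1}))\to H_3(X\sharp Y_{g,1})$ is surjective; since $\phi$ is the identity on $\partial$, it therefore acts trivially on $H_3$. With this addition your argument is complete and amounts to an explicit unpacking of the paper's two-line proof. (Incidentally, the paper's assertion that $j_*$ is injective in all degrees has the same defect in degree~$3$, and the same remedy applies.)
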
 

\begin{proof}The inclusion $Y_{g,1}\subset W_{g,1}$ is injective on homology, so the same holds for the inclusion $X\sharp Y_{g,1}\subset X\sharp W_{g,1}$. Combining this with the observation that the composition $\pi_1(\Emb_{}(\sqcup^g D^2\times S^2,X\sharp W_{g,1}),\mr{inc})\ra \pi_0(\Diff_\partial(X\sharp Y_{g,1}))\ra \pi_0(\Diff_\partial(X\sharp W_{g,1}))$
is trivial by exactness, the claim follows.
\end{proof}

\subsubsection*{\circled{2} \& \circled{4}}The effect of gluing $\sqcup^g S^1\times D^3$ to $Y_{g,1}$ using the canonical identification $\partial(\sqcup^g S^1\times D^3)=\partial(\sqcup^g D^2\times S^2)=\partial Y_{g,1}$ agrees with the result of doing surgery along the embeddings constituting $\sqcup^gD^2\times S^2\subset W_{g,1}$, so this process yields a manifold diffeomorphic to $D^4$. Hence there are embeddings $Y_{g,1}= W_{g,1}\backslash \sqcup^g \mr{int}(D^2\times S^2)\hookrightarrow D^4$, one for each $g$, which we may choose so that the extension map $ \pi_0(\Diff_\partial(X \sharp Y_{g,1}))\ra \pi_0(\Diff_\partial(X \sharp D^4))$ is compatible with the extension maps $\pi_0(\Diff_\partial(X \sharp Y_{g-1,1}))\ra \pi_0(\Diff_\partial(X \sharp Y_{g,1}))$. Taking colimits and restricting to Torelli subgroups defines $\circled{2}$. The morphism \circled{4} is the canonical map into the colimit (note that $D^4=Y_{0,1}$). Going through the definition, one sees that the composition $\circled{2}\circ \circled{4}$ agrees with the identity, so $\circled{2}$ is split surjective and $\circled{4}$ is split injective.

\subsubsection*{\circled{3}}This morphism is induced by extension with the identity along $X\sharp D^4\subset X\sharp S^4\cong X$. We claim that it is surjective and its kernel is generated by the Dehn twist $t_X$. To see this, we use that the restriction map $\Tor(X) \ra \Emb^+(D^4,X)$ along an embedded disc $D^4\subset X$ is a fibration by parametrised isotopy extension, whose fibre over the inclusion is $\Tor_\partial(X\backslash\interior(D^4))\cong \Tor_\partial(X\sharp D^4)$. Taking derivatives induces a homotopy equivalence $\Emb^+(D^4,X)\simeq \Fr^+(X)$ to the oriented frame bundle which in turns fits into a fibre sequence $\SO(4)\ra \Fr^+(X)\ra X$. As $X$ is simply-connected, the long exact sequence of homotopy groups yields $\pi_0(\Fr^+(X))=*$ and $\pi_1(\Fr^+(X))\cong\bfZ/2$, so the long exact sequence of homotopy groups for the fibre sequence $\Tor_\partial(X\sharp D^4)\ra\Tor(X) \ra \Emb^+(D^4,X)$ induces a short exact sequence of the form
\begin{equation}\label{equ:sequence-Dehn-twist}
	\smash{\bfZ/2 \xra{t_X} \pi_0(\Diff_\partial(X\sharp D^4)) \lra \pi_0(\Diff^+(X)) \lra 0,}
\end{equation}
whose first map maps the generator to the Dehn twist $t_X$. Passing to Torelli subgroups, the claim follows.

\medskip

In addition to the above diagram, the main ingredient in the proof of \cref{thm:gay} is the following.

\begin{lem}\label{lem:generating-tor}For a closed simply-connected smooth $4$-manifold $X$, the group $\pi_0(\Tor_\partial(X\sharp D^4))$ is generated by the Dehn twist $t_X$ and the kernel of the stabilisation $(\circled{5}\circ\circled{4})\colon \pi_0(\Tor_\partial(X\sharp D^4))\to \colim_{g \to \infty}\,\pi_0(\Tor_\partial(X\sharp W_{g,1}))$.
\end{lem}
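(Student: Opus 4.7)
The plan is to use the boundary cap-off fibration to reduce to the closed case, and then invoke the combined work of Kreck and Quinn to compute the stable Torelli group of a closed simply-connected smooth $4$-manifold. Parametrised isotopy extension combined with the homotopy equivalence $\Emb^+(D^4, N) \simeq \mr{Fr}^+(N)$ yields a fibre sequence
\[\Diff_\partial(X \sharp W_{g,1}) \lra \Diff^+(X \sharp (S^2 \times S^2)^{\sharp g}) \lra \mr{Fr}^+(X \sharp (S^2 \times S^2)^{\sharp g})\]
for each $g$. Passing to $\pi_0$ and restricting to Torelli subgroups gives an exact sequence
\[\pi_1(\mr{Fr}^+(X \sharp (S^2 \times S^2)^{\sharp g})) \lra \pi_0(\Tor_\partial(X \sharp W_{g,1})) \lra \pi_0(\Tor(X \sharp (S^2 \times S^2)^{\sharp g})) \lra 0.\]
Since $X \sharp (S^2\times S^2)^{\sharp g}$ is simply-connected, $\pi_1$ of its oriented frame bundle is a quotient of $\pi_1(SO(4)) \cong \bfZ/2$, and its image in $\pi_0(\Tor_\partial)$ is generated by the boundary Dehn twist, which I denote by $t'_g$.

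The combined work of Kreck \cite{KreckIsotopy}---diffeomorphisms of a closed simply-connected smooth $4$-manifold acting trivially on $H_2$ are stably pseudo-isotopic to the identity---and Quinn \cite{Quinn}---pseudo-isotopies in dimension $4$ become isotopies after sufficient stabilisation---then gives the vanishing
\[\colim_g\,\pi_0(\Tor(X \sharp (S^2 \times S^2)^{\sharp g})) = 0.\]
Taking the colimit of the preceding exact sequence, $\colim_g\,\pi_0(\Tor_\partial(X\sharp W_{g,1}))$ is therefore generated by the stabilised classes $\{t'_g\}$.

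Finally, naturality of the cap-off fibration with respect to the stabilisation $X\sharp D^4 \subset X\sharp W_{g,1}$ (induced by $D^4 \subset W_{g,1}$) implies that the stabilisation map sends $t_X$ to $t'_g$: both arise as the image of the generator of $\pi_1(SO(4))$ under the respective connecting maps, and the induced map $\pi_1(\mr{Fr}^+(X))\to \pi_1(\mr{Fr}^+(X \sharp (S^2\times S^2)^{\sharp g}))$ is an isomorphism (both groups are $\bfZ/2$ if $X$ is spin and trivial otherwise). The image of $\pi_0(\Tor_\partial(X\sharp D^4))$ in $\colim_g\,\pi_0(\Tor_\partial(X\sharp W_{g,1}))$ is thus the subgroup generated by the image of $t_X$, so any $[\phi]\in \pi_0(\Tor_\partial(X\sharp D^4))$ has the form $t_X^k\cdot [\psi]$ with $[\psi]$ in the kernel of the stabilisation. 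The main obstacle is extracting the vanishing $\colim_g\,\pi_0(\Tor(X \sharp (S^2 \times S^2)^{\sharp g})) = 0$ from the literature, combining Kreck's stable pseudo-isotopy classification with Quinn's pseudo-isotopy theorem in dimension $4$.
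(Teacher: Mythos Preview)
Your overall strategy---reduce to the vanishing $\colim_g\pi_0(\Tor(X\sharp W_g))=0$ via Kreck and Quinn, then use the cap-off exact sequence---is reasonable and does lead to a proof, but the argument as written has a genuine gap at the step ``$s(t_X)=t'_g$''. The cap-off fibrations are \emph{not} natural with respect to stabilisation in the way you claim: there is no map $\Diff^+(X)\to\Diff^+(X\sharp W_g)$ (a diffeomorphism of the closed manifold $X$ need not fix any prescribed disc), so there is no map of fibre sequences and hence no commuting square relating the two connecting maps. Knowing that both $\pi_1(\Fr^+(X))$ and $\pi_1(\Fr^+(X\sharp W_g))$ are $\bfZ/2$ and that each maps to its respective boundary Dehn twist does not by itself tell you that the stabilisation $\pi_0(\Diff_\partial(X\sharp D^4))\to\pi_0(\Diff_\partial(X\sharp W_{g,1}))$ carries $t_X$ to $t'_g$. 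The same issue also undermines your formation of the colimit $\colim_g\pi_0(\Tor(X\sharp W_g))$: defining stabilisation maps on the closed mapping class groups already requires knowing that $t'_g\mapsto t'_{g+1}$.

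The missing ingredient is Giansiracusa's result (used elsewhere in the paper) that the Dehn twists along the two boundary spheres of $W_1\setminus\interior(D^4\sqcup D^4)$ are isotopic rel boundary; this is what shows that stabilisation takes boundary Dehn twist to boundary Dehn twist and makes your colimit well-defined. Once you invoke this, your argument goes through. By contrast, the paper's proof avoids this detour entirely: it applies Kreck directly to the fixed manifold $X$ (not to the whole tower $X\sharp W_g$) to write $\pi_0(\Tor_\partial(X\sharp D^4))$ as generated by $t_X$ together with diffeomorphisms pseudo-isotopic to the identity, and then applies Quinn once to place the latter in the kernel of stabilisation. This is shorter and sidesteps the need to ever form a stabilisation map on closed mapping class groups.
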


\begin{proof}
By a result of Kreck \cite[Theorem 1]{KreckIsotopy}, the subgroup $\pi_0(\Tor(X))\subset \pi_0(\Diff(X))$ agrees with the subgroup of diffeomorphisms that are pseudo-isotopic to the identity, so it follows that the group $\pi_0(\Tor_\partial(X\sharp D^4))$ is generated by the Dehn twist $t_X\in \pi_{0}(\Tor_\partial(X\sharp D^4))$ and the subgroup of $\pi_{0}(\Tor_\partial(X\sharp D^4))$ consisting of diffeomorphisms pseudo-isotopic to the identity. But by a result of Quinn \cite[Theorem 1.4]{Quinn} (whose proof was corrected in \cite[Theorem 1.1]{GGHKS}), any diffeomorphism of $X\sharp D^4$ that is pseudo-isotopic to the identity lies in the kernel of the map $\pi_0(\Diff_\partial(X\sharp D^4))\to \colim_{g \to \infty}
\,\pi_0(\Diff_\partial(X\sharp W_{g,1}))$.
\end{proof}

We now conclude the proof of \cref{thm:gay} by showing that the composition $\circled{3}\circ\circled{2}\circ\circled{1}$ is surjective. Since $\circled{3}$ is surjective and its kernel is generated by $t_X$, it suffices to show that $\pi_0(\Tor_\partial(X\sharp D^4))$ is generated by the Dehn twist and the image of $\circled{2}\circ \circled{1}$. By \cref{lem:generating-tor} it thus suffices to show that the kernel of $\circled{5}\circ\circled{4}$ is contained in the image of $\circled{2}\circ \circled{1}$. But for $x\in \ker(\circled{5}\circ\circled{4})$, we have $\circled{4}(x)\in\im(\circled{1})$ by exactness of \eqref{equ:exact-colim-sequence}, so by applying $\circled{2}$ and using $\circled{2}\circ\circled{4}=\id$, we get $x\in\im(\circled{2}\circ\circled{1})$ as claimed.

\begin{rem}\label{rem:related}
\cref{thm:dehn} shows that the Dehn twist generator $t_X\in \pi_0(\Tor_\partial(X\sharp D^4))$ in \cref{lem:generating-tor} is not actually necessary. This also shows that the composition of $\circled{$1$}$ and $\circled{$2$}$ is already surjective, so one can replace $\pi_0(\Tor(X))$ by $\pi_0(\Tor_\partial(X\sharp D^4))$ in the statement of \cref{thm:gay}.
\end{rem}

\subsection*{Proof of \cref{thm:dehn}} Recall that the Dehn twist $t_X$ is the image of the non-trivial element under the leftmost map in \eqref{equ:sequence-Dehn-twist}. To study its behaviour under stabilisation, we use that the Dehn twists in $\pi_0(\Diff_\partial(W_{1,1}\sharp D^4))$ around the two boundary components are isotopic (see \cite[Proposition 3.7]{Giansiracusa}), so the stabilisation map $\smash{\pi_0(\Diff_\partial(X \sharp W_{g,1})) \to \pi_0(\Diff_\partial(X \sharp W_{g+1,1}))}$ sends $t_{X\sharp W_g}$ to $t_{X\sharp W_{g+1}}$. Combined with \eqref{equ:sequence-Dehn-twist} and writing $W_g\coloneq\sharp^gS^2\times S^2$ this shows that there is a stabilisation map $\pi_0(\Diff^+(X \sharp W_{g})) \to \pi_0(\Diff^+(X \sharp W_{g+1}))$, a well-defined element $t_{X\sharp W_\infty} \in \colim_{g \to \infty}\,\pi_0(\Diff_\partial(X \sharp W_{g,1}))$, an exact sequence
	\begin{equation}\label{stable-Dehn}\smash{\bfZ/2 \xra{t_{X\sharp W_\infty}} \underset{g \to \infty}{\colim}\,\pi_0(\Diff_\partial(X\sharp W_{g,1})) \lra \underset{g \to \infty}{\colim}\,\pi_0(\Diff^+(X\sharp W_g)) \lra 0,}\end{equation}
and that \cref{thm:dehn} is equivalent to showing that $t_{X\sharp W_\infty}$ is trivial. Since every orientation-preserving diffeomorphism between oriented manifolds can be assumed to fix a codimension $0$ disc, this question depends only on the oriented diffeomorphism type of $X$ up to taking connected sums with $S^2 \times S^2$, the \emph{stable diffeomorphism type}. The latter is by \cite[Theorem 2 and 3]{Wall4mfds} determined by the \emph{stable isomorphism type} of the intersection form, i.e.\,the isomorphism type of the intersection form up to adding copies of the hyperbolic form $H \coloneq (\bfZ^2,\left[\begin{smallmatrix} 0 & 1\\ 1 & 0 \end{smallmatrix}\right])$, the intersection form of $S^2\times S^2$.

If $X$ is not spin, then the Dehn twist $t_X$ is trivial even before stabilisation (see the remark in the introduction). If $X$ is spin, then its intersection form is even and its signature is divisible by $16$ as a consequence of Rokhlin's theorem. From the classification of indefinite unimodular symmetric bilinear forms over $\bfZ$ \cite[Theorem II.5.3]{MilnorHusemoller}, it follows that the stable isomorphism type (i.e.~isomorphism up to addition of $H$) of the intersection form of $X$ agrees with that of $\smash{(E_8^{\oplus -2} \oplus H^{\oplus 3})^{\oplus k}}$ for some $k\in\bfZ$, which is the intersection form of $\smash{\sharp^kK_3}$ (the convention is that $\smash{\sharp^kK_3}$ is for negative $k$ the $|k|$-fold connected sum of $K_3$ with the opposite orientation). We may thus assume $\smash{X=\sharp^kK_3}$ for some $k\in\bfZ$.
To settle this case, we first prove the following result on stable abelianisations.

\begin{lem}\label{lem:abelianisation-lemma}Let $X=\sharp^k K_3$ for $k\in \bfZ$, then
\[\underset{g \to \infty}{\colim}\, \oH_1(\pi_0(\Diff_\partial(X \sharp W_{g,1})))\cong\begin{cases}
\underset{g \to \infty}{\colim}\, \oH_1(\pi_0(\Diff_\partial(W_{g,1})))&\text{if }t_{X\sharp W_\infty}\text{ is trivial,}\\
\underset{g \to \infty}{\colim}\, \oH_1(\pi_0(\Diff_\partial(W_{g,1})))\oplus \bfZ/2&\text{otherwise}.
\end{cases}
\]
Moreover, we have $\underset{g \to \infty}{\colim}\, \oH_1(\pi_0(\Diff_\partial(W_{g,1})))\cong\bfZ/2\oplus \bfZ/2$.
\end{lem}
\begin{proof}To ease the notation, given a sequence of groups $ G_1\ra G_{2}\ra \cdots$ we write $G_\infty$ for its colimit. We first show that the following composition induces a homology isomorphism in all degrees
\begin{equation}\label{equ:comp}
	\pi_0(\Diff_\partial(W_{\infty,1}))\lra\pi_0(\Diff_\partial(X\sharp W_{\infty,1}))\lra \pi_0(\Diff^+(X\sharp W_\infty)),
\end{equation}
so in particular the second map induces a split surjection on all homology groups. The second map is as in \eqref{stable-Dehn} and the first map is constructed as follows: in terms of the identification of $X\sharp W_{g,1}$ with  $(X \sharp D^4) \natural W_{g,1}$, the stabilisation map is given by $(-)\natural \id_{W_{1,1}}$, so the map $\id_{X \sharp D^4}\natural(-)$ induces a map on colimits. 

Now by \cite[Proposition 3.7]{Giansiracusa}, the Dehn twist $t_{W_g}\in \pi_0(\Diff_\partial(W_{g,1}))$ is trivial for all $g\ge0$, so $\pi_0(\Diff_\partial(W_{g,1})) \to \pi_0(\Diff^+(W_{g}))$ is an isomorphism. Similar to the argument in \cref{lem:generating-tor}, a combination of \cite[Theorem 1, Remark 2)]{KreckIsotopy} and Quinn \cite[Theorem 1.4]{Quinn} thus identifies the composition \eqref{equ:comp} with the induced map on automorphism groups of intersection forms,
\begin{equation}\label{equ:comp-aut} \smash{\id_{E_8^{\oplus k}} \oplus (-) \colon \Aut(H^{\oplus \infty}) \lra \Aut(E_8^{\oplus k} \oplus H^{\oplus \infty})}.\end{equation}
 Fixing an isomorphism $E_8 \oplus -E_8 \cong H^{\oplus 8}$, the composition of $E_8^{\oplus k}\oplus(-)$ with $-E_8^{\oplus k}\oplus (-)$ agrees on each finite stage of the colimit with $(-)\oplus H^{\oplus 8k}$ up to an inner automorphism. As inner automorphisms induce the identity on group homology \cite[Proposition II.6.2]{Brown} and taking homology of a group preserves sequential colimits by p.\,121 loc.cit., it follows that the map $\smash{\id_{-E_8^{\oplus k}} \oplus (-) \colon\Aut(E_8^{\oplus k} \oplus H^{\oplus \infty}) \to \Aut(H^{\oplus 8+\infty}) =\Aut(H^{\oplus \infty})}$ is inverse to \eqref{equ:comp-aut} on the level of homology groups, so the composition \eqref{equ:comp} is indeed a homology isomorphism. 

If $t_{X\sharp W_\infty}$ is trivial, then the second map in \eqref{equ:comp} is an isomorphism by \eqref{stable-Dehn}, so the first map induces an isomorphism on all homology groups, in particular on $\oH_1(-)$ as claimed. If $t_{X\sharp W_\infty}$ is nontrivial, then \eqref{stable-Dehn} is a short exact sequence, so using that $t_{X\sharp W_\infty}$ is a central element, the 5-term exact sequence (see \cite[Corollary VII.6.4]{Brown}) of this short exact sequence has the form
	 \[\begin{tikzcd}[row sep=0.2cm]
	   & \oH_2(\pi_0(\Diff^+_\partial(X\sharp W_{\infty,1}))) \rar{(\ast)}
	 	\ar[draw=none]{d}[name=X, anchor=center]{}
	 	& \oH_2(\pi_0(\Diff^+(X\sharp W_\infty))) \ar[rounded corners,
	 	to path={ -- ([xshift=2ex]\tikztostart.east)
	 		|- (X.center) \tikztonodes
	 		-| ([xshift=-2ex]\tikztotarget.west)
	 		-- (\tikztotarget)}]{dll}[at end]{} & \\      
	 	\bfZ/2 \rar & \oH_1(\pi_0(\Diff_\partial(X\sharp W_{\infty,1}))) \arrow[r,"(\ast)",swap] & \oH_1(\pi_0(\Diff^+(X\sharp W_\infty))) \rar & 0
	 \end{tikzcd}\]
Since $\pi_0(\Diff^+_\partial(X\sharp W_{\infty,1}))\ra \pi_0(\Diff^+(X\sharp W_\infty))$ induces a split surjection on all homology groups by the first part, the two arrows indicated by $(\ast)$ are split surjective, so by exactness  $\oH_1(\pi_0(\Diff_\partial(X\sharp W_{\infty,1})))$ is isomorphic to $\oH_1(\pi_0(\Diff^+(X\sharp W_\infty)))\oplus\bfZ/2$, as claimed.

As $\pi_0(\Diff_\partial(X\sharp W_{\infty,1})\cong\Aut(H^{\oplus \infty}) $ by the first part of the proof, the final part of the claim follows from the known fact that $\oH_1(\Aut(H^{\oplus \infty}))\cong\bfZ/2\oplus \bfZ/2$ (see e.g.\,\cite[Proposition 2.2]{GRWabelian} for a reference).
\end{proof}

In view of \cref{lem:abelianisation-lemma}, to show that $t_{X\sharp W_\infty}$ is trivial, it suffices to prove that the groups $\oH_1(\pi_0(\Diff_\partial(W_{\infty,1})))$ and $\oH_1(\pi_0(\Diff_\partial(X \sharp W_{\infty,1})))$ are abstractly isomorphic. As $\oH_1(\pi_0(G))\cong\oH_1(\mathrm{B}G)$ holds for any topological group $G$ (because $\pi_1(\mathrm{B}G)\cong \pi_0(G)$ and $\oH_1(A)$ agrees with the abelianisation of $\pi_1(A)$ for any connected space $A$), this is equivalent to showing that the first homology groups of the homotopy colimits \begin{equation}\label{equ:colim-space}\smash{\underset{g \to \infty}{\hocolim}\,\BDiff_\partial(Z \sharp W_{g,1})}\end{equation} are isomorphic in the two cases $Z=S^4$ and $Z=X=\sharp^k K_3$. Here we used that taking homology turns sequential homotopy colimits into colimits (for a reference, use the mapping telescope model for homotopy colimits and apply \cite[Section 14.6]{MayConcise}). We will show the more general statement that the homology groups of the spaces \eqref{equ:colim-space} are in any degree independent of the choice of $Z$ up to isomorphism for any closed $1$-connected spin manifold $Z$, in particular for $Z=S^4$ and $Z=X=\sharp^k K_3$. This independence result follows from work of Galatius--Randal-Williams: an application of \cite[Theorem 1.5]{GRWII} to the map $\theta\colon \mathrm{BSpin}(4)\ra \mathrm{BO}(4)$ shows that there is for any $1$-connected closed spin manifold $Z$ a map from \eqref{equ:colim-space} to a fixed infinite loop space $\Omega^\infty \mathrm{MT\theta}$, independent of $Z$, that is a homology equivalence onto the path component of $\Omega^\infty \mathrm{MT\theta}$ that is hit by this map; which path component may depend on $Z$. Deducing this from the cited result uses that, by an application of obstruction theory, the spaces $\BDiff_\partial(W)$ and $\mathrm{Bun}_{n,\partial}^\theta(TW;\hat{\ell}_P)\sslash \Diff_\partial(W)$ in Definition 1.1 loc.cit.~are homotopy equivalent for $n=2$, $P=S^3$, and $W=Z \sharp W_{g,1}$. Since all path components of a loop space are homotopy equivalent to the component of the constant loop (by multiplication with the inverse loop), all path components of $\Omega^\infty \mathrm{MT\theta}$ have the same homology groups, so the claim follows.
%
%
%

\begin{rem}This method can be used to resolve the ambiguity in \cite[Thm 7.1]{Giansiracusa}: the kernel is $(\bfZ/2)^{n-1}$.\end{rem}

\begin{rem}One can provide an explicit bound on the number of stabilisations by $S^2 \times S^2$ that are necessary to make $t_X$ in \cref{thm:dehn} \emph{topologically} isotopic to the identity, using the classification of such manifolds up to homeomorphism after a single stabilisation \cite[Section 10.1]{FreedmanQuinn} and the fact $t_{K3}$ is \emph{topologically} trivial (this implies the same for $t_{\overline{K3}}$). The latter uses the following reformulation (see \cite[Section 2]{KronheimerMrowka}): $t_{K3}$ is smoothly (or topologically) nontrivial if and only if every smooth (or topological) bundle $K3 \to E \to S^2$ satisfies $w_2(TE) = 0$. The existence of a topological bundle with $w_2(TE) \neq 0$ follows from \cite[Section 5]{BaragliaKonno}; it is the ``difference'' of the bundles $E$ and $E'$ over $S^1\times S^1$ which agree over the $1$-skeleton.\end{rem}

\subsection*{Acknowledgments} We thank David Gay for a conversation regarding the first part of this note, Peter Kronheimer for answering some questions related to \cite{KronheimerMrowka}, Mark Powell for pointing us to \cite{Saeki}, and the anonymous referee for their comments and suggestions.

\vspace{-0.1cm}
\bibliographystyle{amsalpha}
\bibliography{literature}

\end{document}